\newtheorem{theorem}{Theorem}[section]
\newtheorem{lemma}{Lemma}[section]
\newtheorem{cor}{Corollary}[section]
\numberwithin{equation}{section}
\theoremstyle{definition}
\theoremstyle{remark}
\begin{document}
\title{A complete monotonicity result involving the $q$-polygamma functions}
\author{Peng Gao}
\address{Department of Mathematics, School of Mathematics and System Sciences, Beijing University of Aeronautics and Astronautics, P. R. China}
\email{penggao@buaa.edu.cn}
\subjclass[2000]{Primary 33D05} \keywords{Completely monotonic
function, $q$-polygamma functions}
\thanks{The author is supported in part by NSFC grant 11371043 and the Fundamental Research Funds for the Central Universities.}
\begin{abstract}
We present some completely monotonic functions involving the
$q$-polygamma functions, our result generalizes some known results.
\end{abstract}

\maketitle
\section{Introduction}
\label{sec 1} \setcounter{equation}{0}
   For a positive real number $x$ and $q \neq 1$, the $q$-gamma function is given by
\begin{eqnarray*}
  \Gamma_q(x)&=& \left\{\begin{array}{ll}
      \displaystyle(1-q)^{1-x}\prod^{\infty}_{n=0}\frac {1-q^{n+1}}{1-q^{n+x}}, & 0<q<1; \\
      \displaystyle(q-1)^{1-x}q^{\frac
1{2}x(x-1)}\prod^{\infty}_{n=0}\frac {1-q^{-(n+1)}}{1-q^{-(n+x)}}, & q>1.
\end{array}\right.
\end{eqnarray*}

     Note that \cite[(1.4)]{A&G} the limit of $\Gamma_q(x)$ as $q \rightarrow
    1$ yields the well-known Euler's gamma function:
\begin{equation*}
    \lim_{q \rightarrow
    1}\Gamma_q(x)=\Gamma(x)=\int^{\infty}_0 t^x e^{-t}\frac
    {dt}{t}.
\end{equation*}

    Recall that a function $f(x)$ is said to
be completely monotonic on $(a, b)$ if it has derivatives of all
orders and $(-1)^kf^{(k)}(x) \geq 0, x \in (a, b), k \geq 0$.  There exists an extensive and rich literature on inequalities for the gamma and $q$-gamma functions of positive real numbers.
Many of these inequalities follow from the
monotonicity properties of functions which are closely related to
$\Gamma$ (resp. $\Gamma_q$) and its logarithmic derivative $\psi$
(resp. $\psi_q$) as $\psi'$ and $\psi'_q$ are completely
    monotonic functions on $(0, +\infty)$. The derivatives $\psi'_q,
\psi''_q, \ldots$ are called the $q$-polygamma functions.

    For positive integers $r, m, n, s$, we denote
\begin{align}
\label{20}
  \alpha_{r,m,n,s} =
     \frac {(m-1)!(n-1)!}{(r-1)!(s-1)!};
     \ \alpha_{r,m,n,0} =  \frac {(m-1)!(n-1)!}{(r-1)!}; \
\beta_{r,m,n,s} =
     \frac {m!n!}{r!s!}.
\end{align}

    For integers $r \geq m \geq n \geq  s\geq 0$ and any real number $t$, we
   define
\begin{align}
\label{02}
   F_{r, m,n,s}(x; t)= (-1)^{m+n}\psi^{(m)}(x)\psi^{(n)}(x) -
   t(-1)^{r+s}\psi^{(r)}(x)\psi^{(s)}(x),
\end{align}
   where we set $\psi^{(0)}(x)=-1$ for convenience.

   In \cite[Theorem 4.1]{Gao1}, it is shown that when $m+n=r+s$, the
   function
   $F_{r, m,n,s}(x; \alpha_{r,m,n,s})$ is completely monotonic on $(0, +\infty)$, while $-F_{r, m,n,s}(x; \beta_{r, m,n, s})$
   is also completely monotonic on $(0, +\infty)$ when $s>0$.  This gives a generalization of a result of Alzer and Wells \cite[Theorem 2.1]{A&W},  which
   asserts that for $n \geq 2$,
   the function $F_{n+1, n, n, n-1}(x; t)$ is strictly completely monotonic on $(0, +\infty)$ if and only if $t \leq
   (n-1)/n$ and $-F_{n+1, n, n, n-1}(x; t)$ is strictly completely monotonic on $(0, +\infty)$ if and only if $t \geq
   n/(n+1)$. Following the methods in the proof of \cite[Theorem 2.1]{A&W}, it is easy to show that the numbers $\alpha_{r,m,n,s}, \beta_{r, m,n, s}$ are best
   possible (see also \cite{Gao3}).

   A special case of \cite[Theorem 4.1]{Gao1} with $m=n=1, r=2,s=0$ implies that the function $
  (\psi'(x))^2+\psi''(x)$
    is completely monotonic on $(0, \infty)$. In particular, it implies that
\begin{align}
\label{1.2}
  (\psi'(x))^2+\psi''(x) \geq 0, \quad x>0.
\end{align}
   a result established in the
   proof of \cite[(4.39)]{alz2}(with strict inequality).

   We may regard the gamma function as a $q$-gamma function with $q=1$ as $\lim_{q \rightarrow 1}\psi_q(x)=\psi(x)$ (see
     \cite{K&S}). In this manner, many completely monotonic functions involving $\Gamma_q(x)$ and $\psi_q(x)$
      are inspired by their analogues involving $\Gamma(x)$ and $\psi(x)$.

   When $q>1$, the $q$-analogue of inequality \eqref{1.2} is given in \cite[Lemma 4.6]{A&G}:
\begin{align}
\label{1.3}
  (\psi'_q(x))^2+\psi''_q(x)>0, \quad x>0.
\end{align}
    In \cite[Theorem 1.1]{Qi}, it is shown that the function given in \eqref{1.3} is completely
monotonic for $q > 1$ on $(0, \infty)$.

    In \cite[Theorem2.1]{Ba1}, it is further shown that the function
\begin{align}
\label{1.4}
  (\psi'_q(x))^2+\psi''_q(x)-\ln q \cdot \psi'_q(x)
\end{align}
   is completely monotonic for $q>0$ on $(0, \infty)$.

    We note that inequality \eqref{1.2} follows from the limiting case $c \rightarrow 0^+$ of the following
inequalities established in the proof of \cite[Theorem
   1.1]{C}:
\begin{align}
\label{1.5}
   \frac 1{c}\Big(\psi(x+c)-\psi(x) \Big )^2 > \psi'(x)-\psi'(x+c), \quad 0 < c < 1, x > 0.
\end{align}

   More generally, one may replace the derivatives in the expression of $F_{r, m,n,s}(x; t)$ by finite differences and study the complete monotonicity property of the resulting function and this is done in \cite{Gao3}.

   A $q$-analogue for inequality \eqref{1.5} is given in \cite[Theorem 4.2]{Gao1}, which asserts that for fixed $0<q<1$ and $0<c<1$,
\begin{align*}
  \frac {1-q}{1-q^c}\Big(\psi_q(x+c)-\psi_q(x) \Big )^2 > q^x(\psi'_q(x)-\psi'_q(x+c))> \Big(\psi_q(x+c)-\psi_q(x) \Big )^2, \quad x>0,
\end{align*}
  with the above inequalities reversed when $c>1$.

  Motivated by the discussions above, it is natural to replace the derivatives in \eqref{1.4} by finite differences to study the complete monotonicity property of the modified function. It is our goal in this paper to study a more general case, the complete monotonicity property of a $q$-analogue of the function defined in \eqref{02}.

   For a given function $f(x)$, any real number $c \neq 0$, we denote
\begin{align*}
   \Delta f(x;c)=\frac {f(x+c)-f(x)}{c}.
\end{align*}

   For integers $r \geq m \geq n \geq  s\geq 0$, real numbers $q>0, q \neq 1, c \neq 0$, we
   define
\begin{align*}
    & F_{r, m,n,s}(x; q, c) \\
    = & (-1)^{m+n}\Delta\psi^{(m-1)}_q(x;c)\Delta\psi^{(n-1)}_q(x;c) -
   \alpha_{r,m,n,s}(-1)^{r+s}\Delta\psi^{(r-1)}_q(x;c)\Delta\psi^{(s-1)}_q(x;c),  \notag
\end{align*}
   where $\alpha_{r,m,n,s}$ is given in \eqref{20} and we set $\psi^{(0)}_q(x)=\psi_q(x), \psi^{(-1)}_q(x)=-x$ for convenience.

    We further define for integers $m \geq 1$, real numbers $q>0, q \neq 1, c \neq 0$,
\begin{align*}
     G_{m}(x;q, c) =mF_{m+1, m,1,0}(x; q, c)
    - (-1)^{m+1}d_{m,q}\ln q \cdot \Delta\psi^{(m-1)}_q(x;c),
\end{align*}
   where
\begin{eqnarray}
\label{1.7}
  d_{m,q}&=& \left\{\begin{array}{ll}
      \displaystyle m-1, & 0<q<1, m \geq 2, \\
      \displaystyle 1, & q>1 \quad \mbox{or} \quad 0<q<1, m=1.
\end{array}\right.
\end{eqnarray}

  Our results are the following:
\begin{theorem}
\label{thm1}
  Let $q>0, q \neq 1$ and $0<c<1$ be fixed. Let $m$ be any fixed positive integer. The function $G_{m}(x;q,c)$
  is completely monotonic on $(0, \infty)$. Moreover, when $m=1, 2$, the function $-G_{m}(x;q,c)$ is completely monotonic on $(0, \infty)$ when $c>1$.
\end{theorem}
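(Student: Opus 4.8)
The plan is to expand $G_m$ as a Dirichlet series in $q^x$ with explicit coefficients and to reduce the entire statement to the nonnegativity (resp. nonpositivity) of those coefficients. Throughout I would first assume $0<q<1$ and write $L=-\ln q>0$, $w_n=\frac{1-q^{nc}}{1-q^n}>0$. Differentiating the standard series $\psi_q(x)=-\ln(1-q)+\ln q\nsum\frac{q^{nx}}{1-q^n}$ and taking a termwise finite difference yields
\begin{align*}
  g_m(x):=(-1)^{m+1}\Delta\psi^{(m-1)}_q(x;c)=\nsum b^{(m)}_n q^{nx},\qquad b^{(m)}_n=\frac{L^m}{c}\,n^{m-1}w_n>0.
\end{align*}
Since $q^{nx}=e^{-nLx}$ is completely monotonic and the $b^{(m)}_n$ are nonnegative, every $g_m$ is completely monotonic. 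The case $q>1$ I would dispose of at the outset via the reflection formulas $\psi^{(k)}_q=\psi^{(k)}_{1/q}$ for $k\ge2$, $\psi'_q=\psi'_{1/q}+\ln q$, and $\psi_q-\psi_{1/q}$ affine in $x$ with slope $\ln q$; after taking finite differences the constant and affine corrections collapse to the single shift $\Delta\psi_q(x;c)=\Delta\psi_{1/q}(x;c)+\ln q$ (all higher differences being unchanged), and a short computation gives $G_m(x;q,c)=G_m(x;1/q,c)$. The two-valued definition of $d_{m,q}$ is precisely what makes this cancellation exact, so it suffices to treat $0<q<1$.

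Next I would carry out the algebraic reduction. Using $\alpha_{m+1,m,1,0}=1/m$, $\Delta\psi^{(-1)}_q(x;c)=-1$, and $\Delta\psi^{(m-1)}_q=(-1)^{m+1}g_m$, $\Delta\psi_q=g_1$, $\Delta\psi^{(m)}_q=(-1)^m g_{m+1}$, the definition collapses to the clean form
\begin{align*}
  G_m(x;q,c)=m\,g_m(x)g_1(x)-g_{m+1}(x)-d_{m,q}\ln q\cdot g_m(x),
\end{align*}
in which $-d_{m,q}\ln q=d_{m,q}L>0$, so the only term with the wrong sign is $-g_{m+1}$. Forming the Cauchy product of the Dirichlet series for $g_m$ and $g_1$, I would write $G_m(x;q,c)=\nsum C_N q^{Nx}$ with
\begin{align*}
  C_N=\frac{L^{m+1}}{c}\Big[\frac{m}{c}\sum^{N-1}_{j=1}j^{m-1}w_jw_{N-j}+d_{m,q}N^{m-1}w_N-N^m w_N\Big].
\end{align*}
Because nonnegative combinations and pointwise limits of completely monotonic functions are again completely monotonic, $G_m$ is completely monotonic once $C_N\ge0$ for all $N$, and $-G_m$ is completely monotonic once $C_N\le0$ for all $N$. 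Everything thus reduces to the coefficient inequality
\begin{align*}
  \frac{m}{c}\sum^{N-1}_{j=1}j^{m-1}w_jw_{N-j}\ \ge\ N^{m-1}\big(N-d_{m,q}\big)w_N\qquad(N\ge1)
\end{align*}
when $0<c<1$, and to its reverse when $c>1$.

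The heart of the proof, and the step I expect to be the main obstacle, is a sharp termwise comparison: for all $j,k\ge1$ one has $w_jw_k\ge c\,w_{j+k}$ when $0<c<1$ and $w_jw_k\le c\,w_{j+k}$ when $c>1$. Writing $x=q^j$, $y=q^k$ this is $\eta(x)\eta(y)\gtrless c\,\eta(xy)$ with $\eta(x)=\frac{1-x^c}{1-x}$, and after the substitution $x=e^{-a},y=e^{-b}$ it becomes a statement about $\tilde H(a)=\ln(1-e^{-ca})-\ln(1-e^{-a})$: since $\tilde H(a)+\tilde H(b)-\tilde H(a+b)\to\ln c$ as $b\to0^+$, the inequality follows once $\tilde H$ is shown to be concave for $0<c<1$ (convex for $c>1$). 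I would reduce this to the sign of $\tilde H''$, which after simplification is exactly the comparison $\sinh(ca/2)\le c\sinh(a/2)$, a consequence of the convexity of $\sinh$ together with $\sinh 0=0$ (and the reverse for $c>1$). This concavity/convexity dichotomy is precisely what flips the sign of the coefficient inequality, and hence flips the complete monotonicity of $G_m$ versus $-G_m$, as $c$ crosses $1$.

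Finally I would assemble the coefficient inequality from this termwise bound. For $m=1$ (where $d_{m,q}=1$) summing $w_jw_{N-j}\ge c\,w_N$ over $j$ gives the claim immediately. For $m\ge2$ (where $d_{m,q}=m-1$) I would combine the termwise bound with the elementary estimate $m\sum^{N-1}_{j=1}j^{m-1}\ge N^{m-1}(N-m+1)$, itself a consequence of the convexity of $t^{m-1}$ via the trapezoidal rule, which holds with equality at $m=2$. This yields $G_m$ completely monotonic for every $m$ when $0<c<1$. For $c>1$ the same two ingredients run with all inequalities reversed; the reversed sum estimate is only compatible with the reversed termwise bound when $m\le2$ (equality at $m=2$), which is exactly why $-G_m$ is obtained only for $m=1,2$. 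This matches the statement of the theorem.
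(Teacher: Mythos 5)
Your proposal is correct and follows essentially the same route as the paper: the reduction $G_m(x;q,c)=G_m(x;1/q,c)$, the expansion into a series in powers of $q^{Nx}$, the termwise bound $w_jw_k \gtrless c\,w_{j+k}$ (which is exactly the paper's inequality \eqref{2.3}, proved by the same concavity argument for $\ln\frac{1-y^{zc}}{1-y^z}$ — your $\sinh(ca/2)\le c\sinh(a/2)$ criterion is equivalent to the paper's monotonicity of $u\mapsto u(\ln u)^2/(1-u)^2$), and the final combinatorial inequality $m\sum_{j=1}^{N-1}j^{m-1}\ge N^m-(m-1)N^{m-1}$, which is the paper's \eqref{2.5} handled by the same Hermite--Hadamard/trapezoidal estimate. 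No gaps; the only differences are cosmetic (working with $0<q<1$ rather than $q>1$, and a slightly more streamlined form of the convexity computations).
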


\begin{theorem}
\label{thm2}
  Let $q>0, q \neq 1$ and $0<c<1$ be fixed. Let $r \geq m \geq n \geq s$ be positive integers satisfying $r+s=m+n$.
  When $s=1$, the function $F_{r, m,n,1}(x; q, c)$
  is completely monotonic on $(0, \infty)$ for $0<q<1$. When $s=2$, the function $F_{r, m,n,2}(x; q, c)$
  is completely monotonic on $(0, \infty)$ for $q>0, q \neq 1$. When $s=3$, the function $F_{r, m,4,3}(x; q, c)$
  is completely monotonic on $(0, \infty)$ for $q>0, q \neq 1$.
\end{theorem}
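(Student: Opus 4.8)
The plan is to convert the complete monotonicity of $F_{r,m,n,s}(x;q,c)$ into the nonnegativity of the coefficients of a Dirichlet-type series in $e^{-\beta x}$, where $\beta=|\ln q|>0$. First I would record the unified series representation of the $q$-polygamma functions: for every $k\ge 1$ and for \emph{both} $0<q<1$ and $q>1$ one has $\psi_q^{(k)}(x)=(-1)^{k+1}\beta^{k+1}\sum_{j\ge1} j^k e^{-\beta j x}/(1-e^{-\beta j})$, the only exception being that for $q>1$ the function $\psi_q$ carries an extra linear term $\beta(x-\tfrac12)$ and $\psi_q'$ an extra additive constant $\beta$. Setting $A_m(x):=(-1)^{m+1}\Delta\psi^{(m-1)}_q(x;c)$, a direct computation then shows that each $A_m$ is a nonnegative exponential series $A_m(x)=\sum_{j\ge1} c_j^{(m)}e^{-\beta j x}$ with $c_j^{(m)}=\beta^m j^{m-1}w_j$ and weight $w_j=(1-e^{-\beta c j})/\bigl(c(1-e^{-\beta j})\bigr)>0$; crucially, the spurious constant survives the difference operator $\Delta$ only in $A_1$, and only when $q>1$. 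This already explains the hypotheses: when $s\ge2$ all four indices $m,n,r,s$ exceed $1$, so no spurious constant ever occurs and any $q\neq1$ is admissible, whereas for $s=1$ the factor $A_1$ acquires the constant $\beta$ when $q>1$, which is exactly the case that must be excluded.

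Next, since the sign factors cancel, $F_{r,m,n,s}(x;q,c)=A_m(x)A_n(x)-\alpha\,A_r(x)A_s(x)$ with $\alpha:=\alpha_{r,m,n,s}=(m-1)!(n-1)!/\bigl((r-1)!(s-1)!\bigr)$. Using $m+n=r+s$ to match the powers of $\beta$ and collecting the coefficient of $e^{-\beta N x}$, I would write $F_{r,m,n,s}(x;q,c)=\sum_{N\ge2}C_N e^{-\beta N x}$ with $C_N=\beta^{m+n}\sum_{j+l=N}w_jw_l\bigl(j^{m-1}l^{n-1}-\alpha\,j^{r-1}l^{s-1}\bigr)$. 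Because each $e^{-\beta N x}$ is completely monotonic, it suffices to prove $C_N\ge0$ for all $N\ge2$; that is, the whole theorem reduces to the single family of weighted inequalities
\begin{equation*}
  \sum_{\substack{j+l=N\\ j,l\ge1}} w_jw_l\bigl(j^{m-1}l^{n-1}-\alpha\,j^{r-1}l^{s-1}\bigr)\ge 0 . \tag{$\star$}
\end{equation*}
(One should also check absolute convergence of the double series for $x>0$, which is routine given the decay of the $c_j^{(m)}$.)

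To attack $(\star)$ I would symmetrize in $j\leftrightarrow l$ and normalize by $Z_N=\sum_{j+l=N}w_jw_{N-j}$, so that $(\star)$ becomes the assertion $E\bigl[\tfrac12(j^{m-1}l^{n-1}+j^{n-1}l^{m-1})\bigr]\ge\alpha\,E\bigl[\tfrac12(j^{r-1}l^{s-1}+j^{s-1}l^{r-1})\bigr]$, where $E$ denotes expectation with respect to the symmetric distribution $\propto w_jw_{N-j}$ on $\{1,\dots,N-1\}$. The two elementary properties of the weight that drive everything are that $w_j=\tfrac1c\,h(\beta j)$ with $h(t)=(1-e^{-ct})/(1-e^{-t})$ is positive, bounded between $1$ and $1/c$, \emph{increasing}, and \emph{log-concave}; the latter makes $j\mapsto w_jw_{N-j}$ symmetric and peaked at the centre $j=N/2$, so that this distribution is no more spread out than the uniform one. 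For the smallest case $F_{3,2,2,1}$ (where $\alpha=\tfrac12$) the inequality collapses to $E[j(N-j)]\ge N^2/6$, which follows because $j\mapsto j(N-j)$ is concave and the $w$-distribution, being more concentrated about $N/2$ than the uniform distribution, makes the concave average only larger, while the uniform distribution already gives $N(N+1)/6\ge N^2/6$. For general $s=1$, for $s=2$, and for $(r,m,n,s)=(m+1,m,4,3)$ the reduced inequality is a polynomial inequality in the low centred moments of this distribution, and I would bound those moments against the uniform ones using the peakedness (convex order) furnished by log-concavity.

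I expect the genuine obstacle to be precisely step $(\star)$ for the higher-index cases. The pointwise bracket $j^{m-1}l^{n-1}-\alpha\,j^{r-1}l^{s-1}$ is \emph{negative} near the endpoints $j=1$ (or $l=1$), so no termwise argument can work; one must exploit the averaging against $w_jw_{N-j}$, and the amount of cancellation one can control grows quickly with $s$ and with $n$. This is why the clean continuous statement of \cite[Theorem 4.1]{Gao1} holds for all admissible indices while its $q$-difference analogue is established here only for $s=1$, $s=2$, and the single sub-case $n=4$ of $s=3$: beyond these, verifying the requisite moment inequalities for the weight $w_j$ becomes the combinatorial bottleneck.
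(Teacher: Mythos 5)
Your reduction is sound and agrees with the paper's: writing $\beta=|\ln q|$, both you and the paper express $F_{r,m,n,s}(x;q,c)$ as $\sum_{N\ge 2}C_Ne^{-\beta Nx}$ with $C_N=\beta^{m+n}\sum_{j+l=N}w_jw_l\bigl(j^{m-1}l^{n-1}-\alpha\, j^{r-1}l^{s-1}\bigr)$, and your observation about the spurious constant in $\Delta\psi_q(x;c)$ for $q>1$ correctly accounts for the restriction to $0<q<1$ when $s=1$. The gap is in your treatment of $(\star)$, which is where all of the difficulty lives, as you yourself acknowledge. You fully verify only the single case $(r,m,n,s)=(3,2,2,1)$, where the symmetrized bracket happens to be concave in $j$, so that peakedness of $j\mapsto w_jw_{N-j}$ relative to the uniform distribution suffices. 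For every other case the symmetrized bracket $\tfrac12(j^{m-1}l^{n-1}+j^{n-1}l^{m-1})-\alpha\cdot\tfrac12(j^{r-1}l^{s-1}+j^{s-1}l^{r-1})$ with $l=N-j$ is neither convex nor concave and changes sign; expanding it in centred moments produces coefficients of both signs, while convex-order domination by the uniform distribution only bounds the even centred moments from \emph{above}. So ``bounding the low centred moments against the uniform ones'' cannot close the argument as stated: you would also need matching lower bounds, which log-concavity of $w$ does not provide.

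The paper closes this gap with two ingredients absent from your proposal. First, it shows (via \cite[Lemma 2.7]{Gao1}) that $j\mapsto T_{m,n,r,s}(j;k)$ changes sign exactly once on $1\le j\le[k/2]$ (negative, then nonnegative), and combines this with the monotonicity of $j\mapsto w_jw_{k-j}$ in an Abel-summation argument to strip the weights off entirely, reducing $(\star)$ to the unweighted inequality $\sum_{j}j^{m-1}(k-j)^{n-1}\ge\alpha_{r,m,n,s}\sum_j(k-j)^{r-1}j^{s-1}$, i.e.\ inequality \eqref{2.1}. Second, it proves \eqref{2.1} for $s=1$, $s=2$ and $(s,n)=(3,4)$ by a delicate finite-difference induction (Lemmas \ref{lem1} and \ref{lem2}); this combinatorial core is precisely what limits the theorem to those cases, and it is the part your proposal replaces with an unexecuted plan. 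Until you either establish the requisite moment inequalities for general indices or supply an analogue of the sign-change-plus-Abel reduction together with the unweighted power-sum inequalities, the proof is incomplete for all cases beyond $F_{3,2,2,1}$.
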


   The $c \rightarrow 0^+$ analogues of the results in Theorem \ref{thm1} and \ref{thm2} can be similarly established using the methods in the proofs of these theorems. Therefore we only state these results below and leave their proofs to the reader.
\begin{cor}
\label{cor1}
  Let $q>0, q \neq 1$ be fixed. Let $m$ be any fixed positive integer. Then the function $m(-1)^{m+1}\psi'_q(x)\psi^{(m)}_q(x) +
    (-1)^{m+1}\psi^{(m+1)}_q(x)- (-1)^{m+1}d_{m,q}\ln q\cdot \psi^{(m)}_q(x)$
  is completely monotonic on $(0, \infty)$, where $d_{m,q}$ is given in \eqref{1.7}.
\end{cor}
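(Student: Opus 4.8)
The plan is to recognize the function in Corollary~\ref{cor1} as the pointwise limit $\lim_{c\to 0^+}G_m(x;q,c)$ and then to transfer complete monotonicity from Theorem~\ref{thm1} to this limit. I would begin by recording the elementary limits of the relevant finite differences: for any function $f$ differentiable at $x$ one has $\Delta f(x;c)=(f(x+c)-f(x))/c\to f'(x)$ as $c\to 0^+$, hence $\Delta\psi^{(j-1)}_q(x;c)\to\psi^{(j)}_q(x)$ for every $j\ge 0$. The one exceptional factor is $\Delta\psi^{(-1)}_q(x;c)$: since $\psi^{(-1)}_q(x)=-x$ by convention, one has $\Delta\psi^{(-1)}_q(x;c)=(-(x+c)+x)/c=-1$ for every $c\neq 0$.

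Next I would carry out the limit explicitly. Using $\alpha_{m+1,m,1,0}=(m-1)!\,0!/m!=1/m$ and the convention $\psi^{(0)}_q=\psi_q$, the definition of $F_{m+1,m,1,0}$ gives
\begin{align*}
  F_{m+1,m,1,0}(x;q,c) &= (-1)^{m+1}\Delta\psi^{(m-1)}_q(x;c)\,\Delta\psi_q(x;c) \\
  &\quad - \tfrac{1}{m}(-1)^{m+1}\Delta\psi^{(m)}_q(x;c)\,\Delta\psi^{(-1)}_q(x;c).
\end{align*}
Letting $c\to0^+$ and using $\Delta\psi^{(-1)}_q(x;c)=-1$, the first term tends to $(-1)^{m+1}\psi'_q(x)\psi^{(m)}_q(x)$ and the second to $\tfrac{1}{m}(-1)^{m+1}\psi^{(m+1)}_q(x)$. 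Multiplying by $m$ and adding the limit $-(-1)^{m+1}d_{m,q}\ln q\cdot\psi^{(m)}_q(x)$ of the remaining term of $G_m$, I recover precisely the function asserted to be completely monotonic.

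It then remains to justify that complete monotonicity survives the limit. For each fixed $0<c<1$ the function $G_m(\cdot;q,c)$ is completely monotonic on $(0,\infty)$ by Theorem~\ref{thm1}, so $(-1)^k\partial_x^kG_m(x;q,c)\ge 0$ for all $k\ge 0$. The main (and essentially only) technical point is the interchange of $\lim_{c\to0^+}$ with $\partial_x^k$. This is routine here because $\partial_x^k\Delta\psi^{(j)}_q(x;c)=\Delta\psi^{(j+k)}_q(x;c)=\int_0^1\psi^{(j+k+1)}_q(x+sc)\,ds$ converges, as $c\to0^+$, to $\psi^{(j+k+1)}_q(x)$ locally uniformly in $x$, and likewise for the products after applying the Leibniz rule. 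Hence each $x$-derivative of $G_m(\cdot;q,c)$ converges to the corresponding derivative of the limit function, and passing to the limit in $(-1)^k\partial_x^kG_m(x;q,c)\ge 0$ yields complete monotonicity of $\lim_{c\to0^+}G_m(x;q,c)$. (Alternatively, one may invoke the standard fact that the class of completely monotonic functions is closed under pointwise limits.) I would note that a direct proof, mimicking the argument used for Theorem~\ref{thm1} with derivatives in place of finite differences, is equally available, as the author indicates.
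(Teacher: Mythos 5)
Your proposal is correct, and your limit computation checks out: with $\alpha_{m+1,m,1,0}=1/m$ and $\Delta\psi^{(-1)}_q(x;c)\equiv -1$, the function $G_m(x;q,c)$ does converge pointwise, as $c\to 0^+$, to exactly the function of Corollary~\ref{cor1}. However, your route differs from the one the paper indicates. The author does not prove the corollary by a limiting argument; he states that the $c\to 0^+$ analogues ``can be similarly established using the methods in the proofs of these theorems,'' i.e.\ by rerunning the proof of Theorem~\ref{thm1} directly with the series representations \eqref{2.2} of $\psi^{(k)}_q$, in which case the $c$-dependent weight handled by inequality \eqref{2.3} disappears and everything reduces to the same combinatorial inequality \eqref{2.5}. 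Your argument instead treats the corollary as a genuine corollary: it invokes Theorem~\ref{thm1} for $0<c<1$ and passes to the limit, using either the classical fact that the class of completely monotonic functions is closed under pointwise limits (Widder), or your self-contained justification via $\partial_x^k\Delta\psi^{(j)}_q(x;c)=\int_0^1\psi^{(j+k+1)}_q(x+sc)\,ds\to\psi^{(j+k+1)}_q(x)$ locally uniformly, which legitimizes interchanging $\lim_{c\to0^+}$ with $\partial_x^k$. Both justifications are sound, and it is worth noting that only values $c\in(0,1)$ are needed, which is precisely the range covered by Theorem~\ref{thm1}. What your approach buys is economy and an actual written proof where the paper offers only a sketch; what the paper's approach buys is independence from any limit-interchange or closure principle, at the cost of repeating the argument of Section~3. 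Either is acceptable; your version is arguably the cleaner derivation of the stated corollary from the stated theorem.
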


\begin{cor}
\label{cor2}
  Let $q>0, q \neq 1$ be fixed. Let $r \geq m \geq n \geq s \geq 0$ be positive integers satisfying $r+s=m+n$.  Let $\alpha_{r,m,n,s}$ be given in \eqref{20}. The function $(-1)^{m+n}\psi^{(m)}_q(x)\psi^{(n)}_q(x) -
   \alpha_{r,m,n,s}(-1)^{r+s}\psi^{(r)}_q(x)\psi^{(s)}_q(x)$
  is completely monotonic on $(0, \infty)$ when $0<q<1, s=1$ or $q>0, s=2$ or $q>0, s=3, n=4$.
\end{cor}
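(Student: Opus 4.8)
The plan is to derive Corollary \ref{cor2} from Theorem \ref{thm2} by letting $c \to 0^+$, so that each finite difference $\Delta(\,\cdot\,;c)$ collapses to an ordinary derivative. For every positive integer $j$ and every $x>0$, the smoothness of the $q$-polygamma functions on $(0,\infty)$ gives
\[
  \lim_{c\to 0^+}\Delta\psi^{(j-1)}_q(x;c)
  = \lim_{c\to 0^+}\frac{\psi^{(j-1)}_q(x+c)-\psi^{(j-1)}_q(x)}{c}
  = \psi^{(j)}_q(x).
\]
Applying this to each of the four factors (the orders $m-1,n-1,r-1,s-1$ are all nonnegative since $r\ge m\ge n\ge s\ge 1$ throughout), and noting that $\alpha_{r,m,n,s}$ is independent of $c$, I get
\[
  \lim_{c\to 0^+}F_{r,m,n,s}(x;q,c)
  = (-1)^{m+n}\psi^{(m)}_q(x)\psi^{(n)}_q(x)
  - \alpha_{r,m,n,s}(-1)^{r+s}\psi^{(r)}_q(x)\psi^{(s)}_q(x),
\]
which is exactly the function appearing in Corollary \ref{cor2}. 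Moreover the three admissible regimes here, namely $0<q<1,s=1$ and $q>0,s=2$ and $q>0,s=3,n=4$, coincide with those of Theorem \ref{thm2}, so for every fixed $c\in(0,1)$ the function being differenced is completely monotonic.

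To conclude complete monotonicity of the limit I would promote the pointwise convergence above to convergence of all derivatives. Since $\Delta(\,\cdot\,;c)$ commutes with $\partial_x$, one has $\partial_x^a\Delta\psi^{(j-1)}_q(x;c)=\Delta\psi^{(j-1+a)}_q(x;c)$, which tends to $\psi^{(j+a)}_q(x)$ as $c\to 0^+$. Expanding $\partial_x^k F_{r,m,n,s}(x;q,c)$ by the Leibniz rule produces a finite sum of products of such differenced factors, each converging termwise, so
\[
  \lim_{c\to 0^+}\partial_x^k F_{r,m,n,s}(x;q,c)
  = \partial_x^k\Big[(-1)^{m+n}\psi^{(m)}_q\psi^{(n)}_q
  - \alpha_{r,m,n,s}(-1)^{r+s}\psi^{(r)}_q\psi^{(s)}_q\Big](x)
\]
for every $k\ge 0$ and $x>0$.

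Theorem \ref{thm2} supplies $(-1)^k\partial_x^k F_{r,m,n,s}(x;q,c)\ge 0$ for all $k\ge 0$, $x>0$ and $c\in(0,1)$; letting $c\to 0^+$ in this inequality preserves its sign, which yields $(-1)^k$ times the $k$-th derivative of the Corollary \ref{cor2} function being nonnegative, i.e.\ the asserted complete monotonicity. This route bypasses any abstract closure principle, since the sign of each derivative is checked directly in the limit. I expect no genuine obstacle: the only item needing care is the Leibniz bookkeeping together with the termwise convergence of the differenced factors, both routine given the real-analyticity of $\psi^{(j)}_q$ on $(0,\infty)$ — which is precisely why the statement is recorded as a corollary. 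If one prefers a one-line argument, it suffices to invoke that the cone of completely monotonic functions is closed under pointwise limits (via the Bernstein--Widder representation), the limit here being finite as a fixed finite combination of smooth functions.
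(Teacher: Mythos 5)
Your proposal is correct, but it takes a genuinely different route from the paper. The paper does not prove Corollary \ref{cor2} by a limiting argument at all: it states that the $c\to 0^+$ analogues ``can be similarly established using the methods in the proofs of these theorems,'' i.e.\ the intended proof is to rerun the argument of Theorem \ref{thm2} directly on the series representation $\psi^{(m)}_q(x)=(\ln q)^{m+1}\sum_{n\ge 1} n^m q^{nx}/(1-q^n)$ (for $0<q<1$), form the Cauchy product, and reduce to the very same combinatorial inequality \eqref{2.1} handled by Lemmas \ref{lem1}--\ref{lem2}; the only change is that the weight $(1-q^{jc})(1-q^{(k-j)c})/\bigl(c^2(1-q^j)(1-q^{k-j})\bigr)$ is replaced by $(\ln q)^2/\bigl((1-q^j)(1-q^{k-j})\bigr)$, which is still positive and increasing in $j$ over $1\le j\le [k/2]$. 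Your approach instead deduces the corollary formally from the already-proved Theorem \ref{thm2} by letting $c\to 0^+$: the identity $\partial_x^a\Delta\psi^{(j-1)}_q(x;c)=\Delta\psi^{(j-1+a)}_q(x;c)$ plus the Leibniz expansion gives termwise convergence of every derivative, so the sign conditions $(-1)^k\partial_x^kF_{r,m,n,s}(x;q,c)\ge 0$ pass to the limit (or, more briefly, one invokes closure of the completely monotonic cone under pointwise limits). This is sound --- the hypotheses of the corollary ($s\ge 1$ in all three regimes, and the same $q$-ranges as Theorem \ref{thm2}) are exactly what is needed --- and it buys economy, since none of the combinatorial work is repeated; the paper's suggested route buys a self-contained derivation that never has to justify an interchange of limit and differentiation, at the cost of redoing the whole reduction to \eqref{2.1}.
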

\section{Lemmas}
\label{sec 2.0} \setcounter{equation}{0}

     For integers $r \geq m \geq n \geq s \geq 1, r+s=m+n$, we define two sequences ${\bf a}(m,n),  {\bf b}(r,m,n,s)$ such that for $k \geq 1$,
\begin{align*}
    {\bf a}(m,n)_{k}& =\sum^{k}_{j=1}j^{m-1}(k-j)^{n-1}, \quad {\bf b}(r,m,n,s)_{k}=
 \alpha_{r,m,n,s}\sum^{k}_{j=1}(k-j)^{r-1}j^{s-1},
\end{align*}
   where $\alpha_{r,m,n,s}$ is defined in \eqref{20} and we define $0^0=1$.

   Our lemmas are concerned with the following inequality for $k \geq 1$:
\begin{align}
\label{2.1}
    {\bf a}(m,n)_{k} \geq {\bf b}(r,m,n,s)_{k}.
\end{align}

\begin{lemma}
\label{lem1}
   Inequality \eqref{2.1} holds for integers $k \geq 1,  r \geq m \geq n \geq 1, r+1=m+n$.
\end{lemma}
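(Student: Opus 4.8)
The plan is to prove the equivalent normalized form of \eqref{2.1}. With $s=1$ we have $r=m+n-1$ and $\alpha_{r,m,n,1}=(m-1)!(n-1)!/(m+n-2)!$, so set
\[
P_k={\bf a}(m,n)_k-{\bf b}(m+n-1,m,n,1)_k=\sum_{j=1}^{k}j^{m-1}(k-j)^{n-1}-\frac{(m-1)!(n-1)!}{(m+n-2)!}\sum_{i=0}^{k-1}i^{m+n-2}.
\]
The goal is $P_k\ge0$ for all $k\ge1$. The case $n=1$ is immediate, since then ${\bf a}(m,1)_k=\sum_{j=1}^{k}j^{m-1}$ while ${\bf b}(m,m,1,1)_k=\sum_{i=0}^{k-1}i^{m-1}$, giving $P_k=k^{m-1}\ge0$ (and $P_k=0$ when $m=n=1$, where the convention $0^0=1$ is used). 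Hence I assume $n\ge2$, which forces $m\ge2$.

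The key point is that one should \emph{not} compare ${\bf a}$ and ${\bf b}$ directly: the two sums agree in their leading term in $k$, so no single Riemann-sum or convexity estimate can separate them. Instead I would compare their increments in $k$. Because the $j=k$ term of ${\bf a}(m,n)_k$ equals $k^{m-1}\cdot0^{n-1}=0$ for $n\ge2$,
\[
P_k-P_{k-1}=\sum_{j=1}^{k-1}j^{m-1}\big[(k-j)^{n-1}-(k-1-j)^{n-1}\big]-\frac{(m-1)!(n-1)!}{(m+n-2)!}\,(k-1)^{m+n-2}.
\]
Writing $N=k-1$ and substituting $u=N-j$, the convolution sum becomes $\sum_{u=0}^{N-1}(N-u)^{m-1}\big[(u+1)^{n-1}-u^{n-1}\big]$.

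The decisive step is then a monotone-integrand estimate. Let $\phi(x)=(N-x)^{m-1}$, which is non-increasing on $[0,N]$, and $\psi(x)=x^{n-1}$, which is non-decreasing there. For each $u$ and each $x\in[u,u+1]$ one has $\phi(u)\ge\phi(x)$, so that $\phi(u)\big(\psi(u+1)-\psi(u)\big)=\int_u^{u+1}\phi(u)\,d\psi(x)\ge\int_u^{u+1}\phi(x)\,d\psi(x)$; summing over $0\le u\le N-1$ yields
\[
\sum_{u=0}^{N-1}(N-u)^{m-1}\big[(u+1)^{n-1}-u^{n-1}\big]\ge\int_0^N(N-x)^{m-1}\,d(x^{n-1})=(n-1)\int_0^N(N-x)^{m-1}x^{n-2}\,dx.
\]
Evaluating the last integral by the Beta function gives $(n-1)N^{m+n-2}B(m,n-1)=\frac{(m-1)!(n-1)!}{(m+n-2)!}N^{m+n-2}$, which is \emph{exactly} the subtracted term $\frac{(m-1)!(n-1)!}{(m+n-2)!}(k-1)^{m+n-2}$. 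Hence $P_k-P_{k-1}\ge0$. Since $P_1=0$ for $n\ge2$, telescoping $P_k=P_1+\sum_{l=2}^{k}(P_l-P_{l-1})$ gives $P_k\ge0$, which is \eqref{2.1}.

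I expect the main obstacle to be conceptual rather than computational: finding the right comparison. Attacking \eqref{2.1} head-on fails because the inequality is sharp at the level of the leading term, and the whole proof turns on first passing to the increments $P_k-P_{k-1}$, where the continuous analogue of the convolution sum matches $\alpha_{r,m,n,1}(k-1)^{m+n-2}$ on the nose and the elementary bound $\phi(u)\ge\phi(x)$ for a non-increasing $\phi$ integrated against a non-decreasing $\psi$ becomes available. The remaining points are routine: the vanishing of the $j=k$ boundary term for $n\ge2$, the initial value $P_1=0$, and the Beta-integral evaluation.
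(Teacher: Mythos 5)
Your proof is correct, and while it shares its opening move with the paper, the heart of the argument is genuinely different. Like the paper, you first reduce \eqref{2.1} to the non-negativity of the increments $P_k-P_{k-1}$ together with the initial value $P_1\ge 0$: this is precisely the paper's reduction of \eqref{2.200} to \eqref{2.03} and \eqref{2.04} in the case $s=0$, where \eqref{2.04} is vacuous and \eqref{2.03} is your increment inequality. The divergence is in how that increment inequality is proved. The paper runs a double induction: an outer induction on the exponent $t=n-1$ via the binomial expansion of $(k-j)^t$, and an inner induction on $k$ that terminates in the elementary binomial-coefficient inequality $\binom{r+T}{r}(k+1)^r\ge\sum_{i=0}^{r}\binom{r+T}{i}k^i$. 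You instead prove it in one stroke: after the substitution $u=k-1-j$ you bound the convolution sum from below by the Riemann--Stieltjes integral $\int_0^N(N-x)^{m-1}\,d(x^{n-1})$, using only that $(N-x)^{m-1}$ is non-increasing while $x^{n-1}$ is non-decreasing, and the Beta-function evaluation of that integral returns exactly $\frac{(m-1)!(n-1)!}{(m+n-2)!}(k-1)^{m+n-2}$, which is the subtracted term. Your route needs no induction on the exponents and makes it transparent why $\alpha_{r,m,n,1}$ is precisely the right constant (it is the Beta value $B(m,n-1)$ times $n-1$); the paper's inductive scheme is clumsier here but has the advantage of being the template that is iterated, via higher-order differences, to handle the cases $s=2$ and $n=4,s=3$ in Lemma \ref{lem2}. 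The boundary checks in your write-up (vanishing of the $j=k$ term for $n\ge2$, the separate case $n=1$ with the convention $0^0=1$, and $P_1=0$) are all in order.
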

\begin{proof}
   For a given sequence ${\bf x}=(x_1, x_2, \cdots )$, we define $\Delta {\bf x}$ to be the sequence satisfying $(\Delta {\bf x})_{k}=x_{k+1}-x_k, k \geq 1$ and we define $\Delta^{(i)}{\bf x}=\Delta (\Delta^{(i-1)} {\bf x})$ for all $i\geq 2$.

   Using slightly different notations by taking integers $s \geq 0, r \geq s, t \geq s$, we see that inequality \eqref{2.1} corresponds to the following inequality for $s=0$:
\begin{align}
\label{2.200}
   {\bf a}(r+1,t+1)_{k} \geq {\bf b}(r+t-s+1,r+1,t+1,s+1)_{k}, \quad k \geq 1.
\end{align}

    It is ready to see that inequality \eqref{2.200} is a consequence of the following inequalities:
\begin{align}
\label{2.03}
    & (\Delta^{(s+1)}{\bf a}(r+1,t+1))_{k} \geq (\Delta^{(s+1)}{\bf b}(r+t-s+1,r+1,t+1,s+1))_{k}, \quad k \geq 1. \\
\label{2.04}
    & (\Delta^{(i)}{\bf a}(r+1,t+1))_{1} \geq (\Delta^{(i)}{\bf b}(r+t-s+1,r+1,t+1,s+1))_{1}, \quad 1 \leq i \leq s.
\end{align}

   When $s=0$, inequality \eqref{2.04} holds trivially so it remains to
   prove inequality \eqref{2.03}. We may now assume $r>1, t \geq 1$. When $t=1$,  inequality \eqref{2.03} becomes the following easily verified inequality:
\begin{align*}
    \sum^{k}_{j=1}j^{r} \geq \frac 1{r+1}k^{r+1}.
\end{align*}
    We then deduce that inequality \eqref{2.1} is valid when $s=0, n=2$.

    Now, we use induction on $k \geq 1$ to prove inequality \eqref{2.03} for all $r>1, t \geq 1$. The case $k=1$ is easy verified. Now assuming that inequality \eqref{2.03} is
valid for all $k \geq 1, 1 \leq t \leq T-1$. When $t = T$, we apply the binomial expansion to see that inequality \eqref{2.03} becomes
\begin{align}
\label{2.005}
    \sum^{T-1}_{i=0}\binom {T}{i} \sum^{k}_{j=1}j^{r}(k-j)^{i} \geq \frac {r!T!}{(r+T)!}k^{r+T}.
\end{align}
   Applying the induction assumption, we see that
\begin{align*}
    \binom {T}{i} \sum^{k}_{j=1}j^{r}(k-j)^{i} \geq  \binom {T}{i}\cdot \frac {r!i!}{(r+i)!}\sum^{k-1}_{j=1}j^{r+i}, \quad 1 \leq i \leq T-1.
\end{align*}
    Using this in \eqref{2.005}, we see that it remains to show that (with empty sums being $0$)
\begin{align*}
    S_k:= \sum^{T-1}_{i=1}\binom {r+T}{r+i}\sum^{k-1}_{j=1}j^{r+i}+\binom {r+T} {r}\sum^{k}_{j=1}j^{r} \geq k^{r+T}.
\end{align*}
   As the above inequality is valid when $k=1$, it suffices to show that
\begin{align*}
    S_{k+1}-S_k \geq (k+1)^{r+T}-k^{r+T}.
\end{align*}
    The above inequality simplifies to be
\begin{align*}
     \sum^{T-1}_{i=1}\binom {r+T}{r+i} k^{r+i}+\binom {r+T}{r}(k+1)^{r}
     \geq (k+1)^{r+T}-k^{r+T}
     =\sum^{r+T-1}_{i=0}\binom{r+T}{i}k^i.
\end{align*}
   We can further recast the above inequality as
\begin{align*}
    \binom {r+T} {r}(k+1)^{r}
     \geq \sum^{r}_{i=0}\binom{r+T}{i}k^i.
\end{align*}
   As the above inequality is easily verified by first applying the binomial expansion to $(k+1)^r$ and then to compare the corresponding coefficients of $k^i, 0 \leq i \leq T$ on both sides, this implies that inequality \eqref{2.03} is valid for all $r \geq 1, t \geq 1, s=0$ and hence
   inequality \eqref{2.1} is valid for  all $k \geq 1$, $r \geq m \geq n \geq 1, r+1=m+n$.
\end{proof}


\begin{lemma}
\label{lem2}
   For integers $k \geq 1,  r \geq m \geq n \geq s, r+s=m+n$, inequality \eqref{2.1} holds when $s=2$ or when $n=4, s=3$.
\end{lemma}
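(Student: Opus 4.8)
The plan is to reuse, almost verbatim, the strategy already set up for Lemma \ref{lem1}. Passing to the shifted notation of \eqref{2.200}, the case $s=2$ of the present lemma corresponds to the value $s=1$ there, so that \eqref{2.1} reduces to a \emph{second}-order difference inequality; the case $n=4,s=3$ corresponds to the value $s=2$ with $t=3$ in \eqref{2.200}, reducing \eqref{2.1} to a \emph{third}-order difference inequality. In either situation the reduction recorded immediately after \eqref{2.200} tells us that it suffices to establish the top-order difference estimate \eqref{2.03} for every $k\geq 1$ together with the finitely many initial inequalities \eqref{2.04}: a single check at $i=1$ when $s=2$, and two checks at $i=1,2$ when $n=4,s=3$.

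First I would dispose of \eqref{2.04}. Since $\mathbf{a}(r+1,t+1)_k$ and the corresponding $\mathbf{b}_k$ vanish or take small explicit values for small $k$ (for instance $\mathbf{a}(r+1,t+1)_1=0$ and $\mathbf{a}(r+1,t+1)_2=1$ for $t\geq 1$), the quantities $(\Delta^{(i)}\mathbf{a})_1$ and $(\Delta^{(i)}\mathbf{b})_1$ for $i=1$ (and $i=2$ in the third-order case) are concrete rational numbers built from the factorials in \eqref{20}, and the asserted inequalities follow by direct evaluation. This step is routine.

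The substance is \eqref{2.03}, which I would prove by induction on $k$ in the style of Lemma \ref{lem1}. As there, one expands the forward differences by the binomial theorem: applying $\Delta$ to $\sum_{j=1}^{k}j^{r}(k-j)^{t}$ replaces $(k-j)^{t}$ by $\sum_{i=0}^{t-1}\binom{t}{i}(k-j)^{i}$, so that one $\Delta$ turns a convolution sum into a combination of convolution sums of the same shape with smaller exponents (plus a clean power-sum piece when the exponent drops to $0$). Iterating $\Delta$ once more (twice, in the third-order case) and doing the same for $\mathbf{b}$ produces a nested binomial sum. Feeding in the induction hypothesis on $k$—and, where the exponents drop into the range already settled, the conclusion of Lemma \ref{lem1}—collapses \eqref{2.03} to a polynomial inequality in $k$ of exactly the type treated at the end of the proof of Lemma \ref{lem1}, which is then closed by comparing the coefficients of each power $k^{i}$ on the two sides, as in the passage culminating in \eqref{2.005}.

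The main obstacle is the bookkeeping for the higher differences: in contrast to the single expansion used in Lemma \ref{lem1}, the second and especially the third difference generate several cross terms, and the final coefficient comparison is genuinely delicate, because the normalization $\alpha_{r,m,n,s}$ in \eqref{20} is chosen so that the two sides of \eqref{2.1} share the same top-order coefficient and the whole inequality lives in the lower-order terms. I expect this tightness to be precisely what forces the restriction $n=4$ in the $s=3$ case: for a general $t$ the third-order coefficient comparison need not fall the right way, whereas for $t=3$ the surviving terms still admit a clean binomial estimate. Accordingly I would first treat $s=2$ for general $r,t$, and then carry out the $t=3$ third-order computation separately, verifying by hand the few remaining coefficient inequalities.
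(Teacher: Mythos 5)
Your reduction to \eqref{2.03} plus \eqref{2.04}, and the idea of expanding the higher differences binomially, is indeed the skeleton of the paper's argument, but the two places where the lemma actually gets proved are exactly the places your plan leaves blank. Note first that proving \eqref{2.03} "by induction on $k$" amounts to establishing the $(s+2)$-nd difference inequality together with the value of the $(s+1)$-st difference at $k=1$. When you binomially expand that $(s+2)$-nd difference (the paper's \eqref{2.08}) and feed in the inductive hypothesis (which is an induction on the exponent $t$, not on $k$) for the terms with exponent $t'\geq s$, there remain unmatched low-order terms: $\sum_{t=0}^{s-1}\binom{T}{t}(\Delta^{(s+1)}\mathbf{a}(r+1,t+1))_k$ on one side against $\sum_{t=s-r}^{s-1}\binom{T}{t}(\Delta^{(s+1)}\mathbf{b}(r+t-s+1,r+1,t+1,s+1))_k$ on the other (the paper's inequality \eqref{2.18}). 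These terms carry no induction hypothesis and are not covered by Lemma \ref{lem1} (they are still $s$-level convolutions, not the $s=0$ ones that lemma settles), so they must be evaluated as explicit polynomials in $k$ and compared coefficient by coefficient. This comparison is the heart of the proof and is precisely what produces the restriction to $s=2$ and to $n=4,\,s=3$: for the shifted value $s=2$ the paper can only verify it at $T=3$. Your proposal correctly guesses that the restriction lives here, but never states, let alone proves, the inequality in question.

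Second, your claim that the initial-value inequalities are "concrete rational numbers" settled "by direct evaluation" fails for the case $s=2$ of \eqref{2.1}: the base case $k=1$ of your induction for \eqref{2.03} is \eqref{2.04} at $i=s+1=2$ (a check you did not even list), and it unwinds to
\[
 2^r+2^t-2-\frac{r!\,t!}{(r+t-1)!}\,2^{r+t-1}\ \geq\ 0
\]
for \emph{all} integers $r\geq t\geq 1$. This is a two-parameter family of inequalities, not a finite computation; the paper proves it by analyzing the difference $D(r,t+1)-D(r,t)$ and invoking $\binom{2t}{t}\geq 2^t$. Until you supply both the unmatched-term comparison \eqref{2.18} and this estimate, the proposal remains a plan rather than a proof.
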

\begin{proof}
    Using the notations in Lemma \ref{lem1}, we apply the binomial expansion to see that for integers $T \geq s \geq 1, r \geq s, k \geq 1$,
\begin{align*}
   & (\Delta {\bf a}(r+1,T+1))_{k}=\sum^{T-1}_{t=0}\binom {T}{t}{\bf a}(r+1,t+1)_{k}, \\
 & (\Delta {\bf b}(r+T-s+1,r+1, T+1, s+1))_{k} \\
  =&\alpha_{r+T-s+1,r+1,T+1,s+1}\sum^{T-1}_{t=s-r}\binom {r+T-s}{r+t-s}\sum^{k}_{j=1}j^s(k-j)^{r+t-s} \\
  =& \sum^{T-1}_{t=s-r}\binom {T}{t}{\bf b}(r+t-s+1,r+1, t+1, s+1)_{k}\\
  =&\sum^{T-1}_{t=s}\binom {T}{t}{\bf b}(r+t-s+1,r+1, t+1, s+1)_{k}\\
  &+\sum^{s-1}_{t=s-r}\binom {T}{t}{\bf b}(r+t-s+1,r+1, t+1, s+1)_{k}.
\end{align*}
   It follows that for integers $i \geq 1, k \geq 1$, we have (with empty sums being $0$)
\begin{align}
\label{2.08}
   & (\Delta^{(i+1)} {\bf a}(r+1,T+1))_{k}=\sum^{T-1}_{t=0}\binom {T}{t}(\Delta^{(i)}{\bf a}(r+1,t+1))_{k}, \\
 & (\Delta^{(i+1)} {\bf b}(r+T-s+1,r+1, T+1, s+1))_{k}  \nonumber \\
  =& \sum^{T-1}_{t=s}\binom {T}{t}(\Delta^{(i)} {\bf b}(r+t-s+1,r+1, t+1, s+1))_{k} \nonumber \\
  &+\sum^{s-1}_{t=s-r}\binom {T}{t}(\Delta^{(i)} {\bf b}(r+t-s+1,r+1, t+1, s+1))_{k}. \nonumber
\end{align}

   Suppose that for fixed $r \geq s, T \geq s \geq 1$, we can show that for all $k \geq 1$,
\begin{align}
\label{2.18}
  \sum^{s-1}_{t=0}\binom {T}{t}(\Delta^{(s+1)}{\bf a}(r+1,t+1))_{k}\geq \sum^{s-1}_{t=s-r}\binom {T}{t}(\Delta^{(s+1)} {\bf b}(r+t-s+1,r+1, t+1, s+1))_{k}.
\end{align}
    As we have trivially for any $k \geq 1$,
\begin{align*}
   &(\Delta^{(s+1)}{\bf a}(r+1,s+1))_{k}
  =(\Delta^{(s+1)} {\bf b}(r+1,r+1, s+1, s+1))_{k}.
\end{align*}
   It follows that if for any $s \leq t < T$, we have
\begin{align}
\label{2.07}
   (\Delta^{(s+1)} {\bf a}(r+1,t+1))_{k} \geq (\Delta^{(s+1)} {\bf b}(r+t-s+1,r+1, t+1, s+1))_{k}, \quad k \geq 1.
\end{align}
    Then inequalities \eqref{2.08} and \eqref{2.18} imply that
\begin{align*}
   (\Delta^{(s+2)} {\bf a}(r+1,T+1))_{k} \geq (\Delta^{(s+2)} {\bf b}(r+T-s+1,r+1, T+1, s+1))_{k}, \quad k \geq 1,
\end{align*}
   which in turn implies that inequality \eqref{2.07} is valid with $t=T$ there, provided we show that inequality \eqref{2.04} is valid for $t=T, i=s+1$.

   We then conclude that inequality \eqref{2.200} is valid for all $t \geq s \geq 1$ provided that both inequalities \eqref{2.18} and \eqref{2.04} (for $1 \leq i \leq s+1$) are valid.

   To facilitate the calculation of the right-hand side expression in \eqref{2.18}, we note that 
\begin{align*}
 \sum^{s-1}_{t=s-r}\binom {T}{t}{\bf b}(r+t-s+1,r+1, t+1, s+1)_{k}= \alpha_{r+T-s+1,r+1,T+1,s+1}\sum^{r-1}_{t=0}\binom {r+T-s}{t}{\bf c}(t, s)_k, 
\end{align*}   
   where for integers $k, s \geq 1$, $t \geq 0$, 
\begin{align*}
    {\bf c}(t,s)_{k}=\sum^{k-1}_{j=0}j^{t}(k-j)^{s}.
\end{align*}

    It follows that
\begin{align*}
  & \sum^{s-1}_{t=s-r}\binom {T}{t}(\Delta^{(s+1)}{\bf b}(r+t-s+1,r+1, t+1, s+1))_{k}  \\
 = & \alpha_{r+T-s+1,r+1,T+1,s+1}\sum^{r-1}_{t=0}\binom {r+T-s}{t}(\Delta^{(s+1)}{\bf c}(t, s))_k,
\end{align*}    
     
   Now, the case $s=2$ of inequality \eqref{2.1} corresponds to the case $s=1$ of inequality \eqref{2.200}.
In this case, it is readily checked that inequality \eqref{2.18} becomes the following inequality, which is seen to be valid by comparing the corresponding coefficients of $(k+1)^t, 0 \leq t \leq r-1$:
\begin{align*}
   (k+2)^r-(k+1)^r =\sum^{r-1}_{t=0}\binom {r}{t}(k+1)^t
   \geq \alpha_{r+T,r+1,T+1,2}\sum^{r-1}_{t=0}\binom {r+T-1}{t}(k+1)^t.
\end{align*}

    It remains to prove inequality \eqref{2.04} for $t \geq s=1$, $1 \leq i \leq 2$. The case $i=1$ holds trivially and when $i=2$, inequality \eqref{2.04} becomes
\begin{align*}
    D(r,t):=2^r+2^t-2 -\frac {r!t!}{(r+t-1)!}\cdot 2^{r+t-1} \geq 0.
\end{align*}
    We may assume that $r \geq t$ and note that the above inequality becomes an identity when $t=1$.  We check that
\begin{align*}
    D(r, t+1)-D(r,t)=2^t-\frac {r!t!}{(r+t)!}\cdot 2^{r+t-1} \cdot (t+2-r).
\end{align*}
   The right-hand side expression above is easily seen to be positive when $r \geq t+2$, while $D(t, t+1)-D(t,t) \geq 0$ and $D(t+1, t+1)-D(t+1,t) \geq 0$ are consequences of the following easily verified inequalities:
\begin{align*}
   \frac {(2t+1)!}{(t+1)!t!} \geq \frac {(2t)!}{t!t!} \geq 2^t.
\end{align*}
   This shows that inequality \eqref{2.1} is valid for $s=2$.

    Next, the case $s=3, n=4$ of inequality \eqref{2.1} corresponds to the case $s=2, t=3$ of inequality \eqref{2.200}.  As inequality \eqref{2.200} holds trivially for $s=t=2$, by our discussions above, it remains to prove \eqref{2.18} for $s=2, T=3$ and \eqref{2.04} for $s=2, t=3, 1 \leq i \leq 3$.

    Inequality \eqref{2.04} for $s=2, t=3, 1 \leq i \leq 3$ are readily checked to be valid, while inequality \eqref{2.18} for $s=2, T=3$ becomes
\begin{align*}
   & (k+3)^r-2(k+2)^r+(k+1)^r+T((k+2)^r-(k+1)^r)  \\
   \geq & \sum^{r-1}_{t=0} \frac {r!T!}{2\cdot t! (r+T-2-t)!}((k+1)^t+(k+2)^t).
\end{align*}

   When $T=3$, the above inequality becomes
\begin{align*}
   (k+3)^r+4(k+2)^r+(k+1)^r \geq \frac 3{r+1}((k+3)^{r+1}-(k+1)^{r+1}).
\end{align*}
   Applying the binomial expansion to write both sides above as sums of powers of $k+1$ and by comparing the corresponding coefficients, we see that
   it suffices to show for $0 \leq i \leq r-1$,
\begin{align*}
   \binom {r}{i}(2^{r-i}+4) \geq \frac 3{r+1}\binom {r+1}{i} 2^{r+1-i},
\end{align*}
  which is equivalent to
\begin{align*}
  4(r+1-i) \geq (5+i-r)2^{r-i}.
\end{align*}
   One checks easily that the above inequality is valid for $r-4 \leq i \leq r-1$ and holds trivially when $i \leq r-5$. This proves \eqref{2.18} for $s=2, T=3$
   and hence completes the proof for the case $s=3, n=4$ of inequality \eqref{2.1}.


\end{proof}
\section{Proof of Theorem \ref{thm1}}
\label{sec 2} \setcounter{equation}{0}
   We note the following expressions for $\psi_q(x)$, which can be found in \cite[(1.3)-(1.4)]{Ba1}:
\begin{align}
\label{2.2}
   \psi_q(x)  &=   -\ln (q-1) + \ln q \left( x -\frac 1{2}- \sum^{\infty}_{n=1}\frac {q^{-nx}}{1-q^{-n}} \right ), \quad q> 1, x>0, \\
  \psi_q(x)  &=   -\ln (1-q) + \ln q \sum^{\infty}_{n=1}\frac {q^{nx}}{1-q^n}, \quad 0 < q< 1, x>0. \nonumber
\end{align}

   For two variables $x,y$, we define $\delta_{xy}=1$ when $x=y$ and $\delta_{xy}=0$ otherwise. It follows from the above expressions that
\begin{align}
\label{3.2}
  \psi_q(x) &=(x-\frac 3{2})\ln q+\psi_{1/q}(x), \\
  \psi^{(m)}_q(x) &=\delta_{1m}\ln q+\psi^{(m)}_{1/q}(x), \quad m \geq 1. \nonumber
\end{align}

  We then deduce that $G_m(x;q,c)=G_m(x;1/q,c)$, hence it suffices to prove
   Theorem \ref{thm1} by assuming that $q>1$. Using \eqref{2.2}, we see that when $q>1$,
\begin{align*}
   G_m(x;q,c)=\frac {(\ln q)^{m+1}}{c^2} \sum^{\infty}_{n=2}q^{-nx}H(m,n,q,c)+(1-\delta_{1m})(\ln q)^{m+1}q^{-x}\frac {(m-2)(1-q^{-c})}{(1-q^{-1})c},
\end{align*}
   where
\begin{align*}
  &H(m,n,q,c) \\
  =& \sum^{n-1}_{j=1}\frac {(1-q^{-jc})(1-q^{-(n-j)c})m(n-j)^{m-1}}{(1-q^{-j})(1-q^{-(n-j)})}-\frac {c(1-q^{-nc})(n^m-(m-1)n^{m-1}-\delta_{1m})}{1-q^{-n}}.
\end{align*}

   It suffices to show $H(m,n,q,c) \geq 0$ for all $n \geq 2$. We first show that for any $1 \leq j \leq n-1$, $0<c<1$,
\begin{align}
\label{2.3}
   \frac {(1-q^{-jc})(1-q^{-(n-j)c})}{(1-q^{-j})(1-q^{-(n-j)})} \geq \frac {c(1-q^{-nc})}{1-q^{-n}},
\end{align}
  with the above inequality reversed when $c>1$. We denote $y=q^{-1}$ so that $0<y<1$ and we let
\begin{align*}
   h(z;y,c)=\ln \frac {1-y^{zc}}{1-y^z}.
\end{align*}

   It is then easy to see that inequality \eqref{2.3} is equivalent to
\begin{align}
\label{2.4}
   h(j;y,c)+h(n-j;y,c) \geq \lim_{t \rightarrow 0^+}\left ( h(t;y,c)+h(n-t;y,c) \right ),
\end{align}
   with the above inequality reversed when $c>1$.

   Applying \cite[Lemma 2.4]{Gao1}, we see that \eqref{2.4} follows if we can show $h(z;y,c)$ is a concave function of $z$ when $0<c<1$ and a convex function of $z$ when $c>1$. Direct calculation shows that
\begin{align*}
   h''(z;y,c)=\frac {(\ln y)^2y^z}{(1-y^{z})^2}-\frac {(\ln y^c)^2y^{zc}}{(1-y^{zc})^2}.
\end{align*}

   As it is easy to check that the function
\begin{align*}
   x \mapsto \frac {u(\ln u)^2}{(1-u)^2}
\end{align*}
    is an increasing function of $0<u<1$, it follows readily that $h''(z;y,c) \leq 0$ when $0< c< 1$ and $h''(z;y,c) \geq 0$ when $c> 1$ , so that inequality \eqref{2.3} follows.

   Now, applying inequality \eqref{2.3} in the expression of $H(m,n,q,c)$, we see that the assertion for Theorem \ref{thm1} is valid  as long as we can show for
   integers $n \geq 2, m \geq 1$,
\begin{align}
\label{2.5}
  m\sum^{n-1}_{j=1}(n-j)^{m-1}\geq n^{m}-(m-1)n^{m-1}-\delta_{1m},
\end{align}
  with equality holds when $m=1,2$.

  It is easy to see that inequality \eqref{2.5} becomes equality when $m=1,2$  and this proves the assertion of Theorem \ref{thm1} for the case $m=1,2$. In fact, as one checks easily $-G_2(x;q,c)=G'_1(x;q,c)$, the assertion of Theorem \ref{thm1} for the case $m=2$ follows from its assertion for the case $m=1$.

  Now, we assume $m \geq 2$ and we let $r=m-1$ to recast inequality \eqref{2.5} as
\begin{align}
\label{3.4}
   (r+1)\sum^{n}_{i=1}i^r \geq (n+1)^{r+1}-r(n+1)^r, \quad r \geq 1, \quad n \geq 1.
\end{align}
   Note that by Hadamard's inequality (\cite[Lemma 2.5]{Gao1}), we have
\begin{align*}
   \int^{n+1}_nx^rdr \leq \frac {n^r+(n+1)^r}{2} \leq \frac r{r+1}(n+1)^r+\frac {1}{r+1}n^r, \quad n \geq 1.
\end{align*}
    It it easy to see that inequality \eqref{3.4} follows from the above inequality and induction (with the case $n=1$ in \eqref{3.4} following from
    the case $n=1$ of the above inequality). This completes the proof for Theorem \ref{thm1}.

\section{Proof of Theorem \ref{thm2}}
\label{sec 3} \setcounter{equation}{0}
    It follows from \eqref{3.2} that when $s \geq 2$, we have $F_{r,m,n,s}(x; q, c)=F_{r,m,n,s}(x; 1/q, c)$. Hence, it suffices to
    prove the assertion of Theorem \ref{thm2} by assuming that $0<q<1$. We start by considering the function $F_{r,m,n,s}(x; q, c)$ with $r>m \geq n>s \geq 1, r+s=m+n$. Using \eqref{2.2}, we see that when $0<q<1, s >0$,
\begin{align*}
   & F_{r, m,n,s}(x; q, c) \\
   =& \frac {(-\ln q)^{m+n}}{c^2} \sum^{\infty}_{k=2}q^{kx}\sum^{k-1}_{j=1}\frac {(1-q^{jc})(1-q^{(k-j)c})}{(1-q^j)(1-q^{k-j})}(j^{m-1}(k-j)^{n-1}-\alpha_{r,m,n,s}j^{r-1}(k-j)^{s-1}),
\end{align*}
    We then deduce that in order for $F_{r, m,n,s}(x; q, c)$ to be completely monotonic on $(0,\infty)$, it suffices to show the inner sum in the above expression is non-negative for all $k \geq 2$. We recast the inner sum above as
\begin{align*}
    \sum^{[k/2]}_{j=1}\frac {(1-q^{jc})(1-q^{(k-j)c})}{(1-q^j)(1-q^{k-j})}T_{m,n,r,s}(j;k)(1-\frac {\delta_{j\frac k{2}}}{2}),
\end{align*}
   where
\begin{align*}
    T_{m,n,r,s}(j;k)& =j^{m-1}(k-j)^{n-1}+(k-j)^{m-1}j^{n-1}-\alpha_{r,m,n,s}(j^{r-1}(k-j)^{s-1}+(k-j)^{r-1}j^{s-1}) \\
                    &=j^{r-1}(k-j)^{s-1}a(k/j-1;r-s, m-s, \alpha_{r,m,n,s}),
\end{align*}
   where for integers $m > n \geq 1$, any fixed constant $0 < c < 1$, the function $t \mapsto a(t;m,n,c)$ is defined as in \cite[Lemma 2.7]{Gao1} (we note here
   that it is easy to verify that $0<\alpha_{r,m,n,s}<1$).

     It is shown in \cite[Lemma 2.7]{Gao1} that $a(t;m,n,c)$ has exactly one root when $t \geq 1$ for any integer $m > n \geq 1$, any fixed constant $0 < c < 1$. Note that $\lim_{t \rightarrow \infty}a(t;m,n,c)=-\infty, a(1;m,n,c)>0$, it follows
    that there exists an integer $1 \leq j_0 \leq [k/2]$ such that $a(k/j-1;r-s, m-s, \alpha_{r,m,n,s})<0$ for $j < j_0$ and $a(k/j-1;r-s, m-s, \alpha_{r,m,n,s}) \geq 0$ for $j_0 \leq j \leq [k/2]$.

      If $j_0=1$, then the assertion of the theorem holds trivially. Otherwise, note that our argument for inequality \eqref{2.3} shows that the sequence
\begin{align*}
   \Big \{ \frac {(1-q^{jc})(1-q^{(k-j)c})}{(1-q^j)(1-q^{k-j})} \Big \}^{[k/2]}_{j=1}
\end{align*}
    is a positive increasing sequence, thus we have
\begin{align*}
   \frac {(1-q^{c})(1-q^{(k-1)c})}{(1-q)(1-q^{k-1})}T_{m,n,r,s}(1;k) \geq \frac {(1-q^{2c})(1-q^{(k-2)c})}{(1-q^2)(1-q^{k-2})}T_{m,n,r,s}(1;k),
\end{align*}
    so that
\begin{align*}
   \sum^{2}_{j=1}\frac {(1-q^{jc})(1-q^{(n-j)c})}{(1-q^j)(1-q^{n-j})}T_{m,n,r,s}(j;k) \geq \frac {(1-q^{2c})(1-q^{(n-2)c})}{(1-q^2)(1-q^{n-2})}\sum^{2}_{j=1}T_{m,n,r,s}(j;k).
\end{align*}
     Consider the sequence of the partial sums:
\begin{align*}
   \Big \{ \sum^i_{j=1}T_{m,n,r,s}(j;k)\Big \}^{[k/2]-1}_{i=1}.
\end{align*}

    If there exists an integer $1 \leq i_0 \leq [k/2]-1$ such that the terms in the above sequence are negative when $i < i_0$ and the term corresponding to $i=i_0$ is non-negative, then we can repeat the above process to see that (note that we must have $i_0 \geq j_0$ here)
\begin{align*}
    & \sum^{[k/2]}_{j=1}\frac {(1-q^{jc})(1-q^{(n-j)c})}{(1-q^j)(1-q^{n-j})}T_{m,n,r,s}(j;k)(1-\frac {\delta_{j\frac k{2}}}{2}) \\
    \geq & \frac {(1-q^{i_0 c})(1-q^{(n-j)i_0})}{(1-q^{i_0})(1-q^{n-i_0})}\sum^{i_0}_{j=1}T_{m,n,r,s}(j;k)(1-\frac {\delta_{j\frac k{2}}}{2}) \\
    & +\sum^{[k/2]}_{j=i_0+1}\frac {(1-q^{jc})(1-q^{(n-j)c})}{(1-q^j)(1-q^{n-j})}T_{m,n,r,s}(j;k)(1-\frac {\delta_{j\frac k{2}}}{2}) \geq 0.
\end{align*}
    If no such $i_0$ exists, then we have
\begin{align*}
    & \sum^{[k/2]}_{j=1}\frac {(1-q^{jc})(1-q^{(n-j)c})}{(1-q^j)(1-q^{n-j})}T_{m,n,r,s}(j;k)(1-\frac {\delta_{j\frac k{2}}}{2}) \\
    \geq & \frac {(1-q^{i_0 c})(1-q^{(n-j)[k/2]})}{(1-q^{[k/2]})(1-q^{n-[k/2]})}\sum^{[k/2]}_{j=1}T_{m,n,r,s}(j;k)(1-\frac {\delta_{j\frac k{2}}}{2}).
\end{align*}

    Hence, it remains to show that
\begin{align*}
    \sum^{[k/2]}_{j=1}T_{m,n,r,s}(j;k)(1-\frac {\delta_{j\frac k{2}}}{2}) \geq 0.
\end{align*}
    We can recast the above inequality as inequality \eqref{2.1} and the assertion of Theorem \ref{thm2} now follows from Lemmas \ref{lem1}-\ref{lem2}.



\end{document}